\theoremstyle{plain}
\newtheorem{theorem}{Theorem}[section]
\newtheorem{lemma}[theorem]{Lemma}
\theoremstyle{definition}
\title{Perfect state transfer is poly-time}
\author{  Gabriel Coutinho\footnote{Acknowledges the support of grants FAPESP 15/16339-2 and FAPESP 13/03447-6. Affiliation at the time of original work: Dep. Ciência da Computação / IME-USP / São Paulo, Brazil.} \\ \small Dep. Ciência da Computação \\ \small UFMG \\ \small Belo Horizonte, MG, Brazil \\ \small \texttt{gabriel@dcc.ufmg.br} \and Chris Godsil\footnote{Acknowledges the support of NSERC Grant RGPIN-9439.} \\ \small Dep. Combinatorics and Optimization \\ \small University of Waterloo \\ \small Waterloo, ON, Canada \\ \small \texttt{ cgodsil@uwaterloo.ca }}  
\date{  \today  }
\begin{document}

\maketitle

\begin{abstract}
We show that deciding whether a graph admits perfect state transfer can be done in polynomial time on a classical computer with respect to the size of the graph.
\end{abstract}

\hspace{-0.325in}\begin{tabular}{rl}
\textbf{Keywords:} & Perfect state transfer; quantum walks; spectral graph theory. \\

\textbf{MSC:} & \texttt{81P68; 05C50; 15A16.}
\end{tabular}

\section{Introduction}

Since the seminal papers by Bose \cite{BoseQuantumComPaths} and Christandl et al.~\cite{ChristandlPSTQuantumSpinNet}, it is safe to say that over 500 papers have been published in physics, computer science and mathematics journals concerning the topic of state transfer over continuous-time quantum walk models. From the combinatorics point of view, there has been a remarkable effort in characterizing simple graphs for which perfect state transfer occurs according to the adjacency matrix or Laplacian matrix models. State transfer has been studied in distance-regular graphs (Coutinho et al.~\cite{CoutinhoGodsilPSTProducts}, Jafarizadeh and Sufiani \cite{JafarizadegPSTDRG}), graph products and alike structures (Ge et al.~\cite{TamonGeGreenbergPerezTamonPSTproducts}, Coutinho and Godsil \cite{CoutinhoGodsilPSTProducts}, Pal and Bhattacharjya \cite{PalBhattacharjyaPSTNEPSP3}, Ackelsberg et al.~\cite{TamonPSTCoronas}), weighted paths (Kay \cite{KayReviewPST}, Vinet and Zhedanov \cite{VinetZhedanovHowTo}), Cayley graphs for abelian groups (Saxena el al.~\cite{SaxenaSeveriniShparlinski}, Cheung and Godsil \cite{GodsilCheungPSTCubelike}, Chan \cite{AdaChanComplexHadamardIUMPST}, Ba\v{s}i\'{c}\cite{BasicCirculant}), and trees (Coutinho and Liu \cite{CoutinhoLiu}), among other examples.

However, to the best of our knowledge, the classical complexity and related computational aspects of determining when perfect state transfer occurs in simple graphs has not been studied. This is the topic of this paper.

\section{The hardness of perfect state transfer}

Let $M$ be a symmetric integer matrix whose rows and columns are indexed by a finite set $V$ of $n$ elements, and whose spectral decomposition is given by
\[M = \sum_{r = 0}^d \theta_r E_r,\]
where the $\theta_r$'s are the distinct eigenvalues of $M$.

If $a,b \in V$, we say that $M$ admits perfect state transfer between $a$ and $b$ if there is a time $\tau \in \R^+$ such that 
\begin{align}|\exp( \ii \tau M)_{a,b}| = 1. \label{pst} \end{align}

Denote by $\ee_a$ the vector whose entires are indexed by $V$ with its $a$th entry equal to $1$, and all the remaining equal to $0$. A necessary condition (see Godsil \cite[Lemma 5.1]{GodsilPerfectStateTransfer12}) for perfect state transfer to occur is that, for all $r$, 
\[E_r \ee_a = \pm E_r \ee_b.\]
When this occurs, we say that $a$ and $b$ are strongly cospectral. The eigenvalue support of $a$, denoted by $\Theta_a$, is the set of eigenvalues of $M$ whose projection of $\ee_a$ onto the corresponding eigenspace is not $0$, that is
\[\Theta_a = \{\theta_r : E_r \ee_a \neq 0\}.\]

If $M$ is a matrix that encodes the adjacency of a graph, say the adjacency or the Laplacian matrix, we say that the graph admits perfect state transfer between the corresponding vertices, or that such vertices are strongly cospectral, with respect to the quantum walk model determined by $M$.

In this paper, we show the following result, which can be notably applied for the cases where $M$ is the adjacency matrix or the Laplacian matrix of a graph.
\begin{theorem} \label{1.1}
For any $n \times n$ symmetric integer matrix $M$ whose entries belong to\footnote{Or, more generally, entries whose absolute values are bounded by a polynomial in $n$.} $[-n,n]$, deciding whether or not $M$ admits perfect state transfer can be done in polynomial time in $n$.
\end{theorem}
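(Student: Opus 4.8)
The plan is to reduce the decision problem to a collection of number-theoretic and spectral conditions that can each be checked in polynomial time. First I would recall the known characterization of when perfect state transfer occurs between $a$ and $b$: it happens if and only if $a$ and $b$ are strongly cospectral and the eigenvalues in the support $\Theta_a$ satisfy a certain arithmetic condition — roughly, writing $\Theta_a = \{\theta_0 > \theta_1 > \dots\}$, one needs all the ratios $(\theta_0 - \theta_r)/(\theta_0 - \theta_1)$ to be rational (equivalently, the $\theta_r$ lie in a common quadratic extension of $\Q$, or more precisely $\theta_r = a + b_r\sqrt{\Delta}$ with $b_r$ rational), together with a sign condition linking the parity of these integer-like quantities to the signs $\sigma_r$ defined by $E_r\ee_b = \sigma_r E_r\ee_a$. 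So the proof has two halves: (1) decide strong cospectrality between a given pair in polynomial time, then iterate over all $\binom{n}{2}$ pairs; and (2) for each strongly cospectral pair, check the arithmetic-plus-sign condition in polynomial time.

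For half (1), the obstacle is that the eigenvalues of $M$ are in general irrational algebraic numbers, so we cannot simply compute $E_r\ee_a$ numerically and compare. Instead I would work with the characteristic polynomial $\phi(M,t)$ and its squarefree factorization over $\Q$, using the fact that $E_r = \prod_{s \neq r}(M - \theta_s I)/(\theta_r - \theta_s)$ can be expressed via polynomials in $M$ with coefficients in $\Q(\theta_r)$. Strong cospectrality, $E_r\ee_a = \pm E_r\ee_b$ for all $r$, is equivalent to a statement about the walk matrices / the sequences $(\ee_a^\top M^k \ee_a)_k$, $(\ee_a^\top M^k \ee_b)_k$, $(\ee_b^\top M^k \ee_b)_k$ — concretely, $a$ and $b$ are cospectral (equal diagonal walk generating functions) and parallel, and these are decided by comparing $O(n)$ integer inner products of sizes bounded by a polynomial in $n$ (using the entry bound $[-n,n]$ to control bit-length). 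The key technical point is that everything reduces to rank computations and equality tests over $\Q$ on matrices of polynomially-bounded bit-complexity.

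For half (2), the main obstacle — and what I expect to be the heart of the paper — is certifying the arithmetic condition on the eigenvalues without computing them to infinite precision. Here I would use the minimal polynomial of $M$ restricted to the $a$-support, call it $\psi_a(t)$, which has integer coefficients and degree at most $n$; the condition "all $\theta_r \in \Theta_a$ lie in $\Q(\sqrt\Delta)$ for some $\Delta$" translates into a condition on the irreducible factors of $\psi_a$ over $\Q$: each irreducible factor must have degree $1$ or $2$, and all the quadratic factors must have the same squarefree part of the discriminant. Factoring $\psi_a$ over $\Q$ is polynomial-time (LLL / Lenstra–Lenstra–Lovász factorization of polynomials), the degree and discriminant checks are then immediate, and finally the parity/sign compatibility — that the integers $b_r$ in $\theta_r = a + b_r\sqrt\Delta$ have the correct parities matched against the signs $\sigma_r$ — can be read off from the factorization, since the $\sigma_r$ for a given irreducible factor are determined by that factor via the idempotent formula. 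I would assemble these pieces: loop over pairs, test strong cospectrality, and for surviving pairs run the factorization-based arithmetic test; the algorithm answers "yes" iff some pair passes both, and the total running time is polynomial because there are polynomially many pairs, each test involves polynomial-size integers, and polynomial factorization over $\Q$ is in $P$. The one subtlety to handle carefully is ensuring the intermediate rationals (coefficients of projections, discriminants) do not blow up in bit-size, which follows from standard bounds on heights of algebraic numbers arising from integer matrices with entries bounded by $n$.
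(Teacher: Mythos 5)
Your proposal follows the same skeleton as the paper --- both reduce the decision problem to the three-part characterization of perfect state transfer (strong cospectrality, the integer/quadratic-integer condition on $\Theta_a$, and the parity-of-signs condition) and verify each part in polynomial time --- but the two middle steps are carried out by genuinely different means. For strong cospectrality, the paper proves a pole-multiplicity lemma (the order of $\theta_j$ as a pole of $\phi_T(x)/\phi(x)$ equals the rank of $(E_j)_{T,T}$, via Jacobi's determinant identity) and thereby reduces the test to ``$\phi_a=\phi_b$'' plus ``$\phi_{ab}/\phi$ has only simple poles,'' the latter checked with polynomial gcds and the squarefreeness test that $\gcd(f,f')$ is a constant; your walk-matrix/rank formulation is equivalent in substance, but you would still need to pin down exactly which integer quantities certify \emph{parallelism} (the route through $\phi_{ab}$ is the clean way to do this). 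For the arithmetic condition, the paper avoids polynomial factorization entirely: it isolates the support polynomial $f(x)=\phi(x)/\gcd(\phi(x),\phi_a(x))$, bounds all eigenvalues by $n^4$ using $\tr M^2$, and observes that the quadratic-integer form together with the known root sum leaves only polynomially many candidate root sets to enumerate; you instead factor $f$ over $\mathbb{Q}$ with LLL and inspect the irreducible factors, which is arguably the more standard and robust route. One caution: as stated your acceptance test is slightly too weak --- requiring only that every irreducible factor have degree at most $2$ and that the quadratic factors share a common squarefree discriminant part does not enforce the common rational part $p/2$ demanded by condition (b) (for instance $\{1\pm\sqrt{2},\,3\pm\sqrt{2}\}$ passes that test but violates the ratio condition for periodicity); you must additionally require all quadratic factors to have the same trace, and any linear factor's root to be compatible with it. With that fix, and with the signs $\sigma_r$ extracted symbolically over $\mathbb{Q}(\sqrt{\Delta})$ from the idempotents (or from eigenvector entries, as the paper does), your plan yields a correct polynomial-time algorithm.
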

To show this result, we make use of the following characterization of perfect state transfer.
\begin{theorem} (see Coutinho \cite[Theorem 2.4.4]{CoutinhoPhD})\label{thm:charpst}
If $M$ is a symmetric integer matrix and two of its columns are indexed by $a$ and $b$, and if $\theta_0>...>\theta_k$ are the eigenvalues in $\Theta_a$, then $M$ admits perfect state transfer between $a$ and $b$ if and only if the following conditions hold.
\begin{enumerate}[(a)]
\item Columns $a$ and $b$ are strongly cospectral. \label{a}
\item Columns $a$ and $b$ are periodic, or equivalently (see Godsil \cite[Theorem 6.1]{GodsilPerfectStateTransfer12}), non-zero elements of $\Theta_a$ are either all integers or all quadratic integers. Moreover, there is a square-free integer $\Delta$, an integer $p$, and integers $q_0,...,q_k$ such that
\[\theta_r = \frac{p + q_r \sqrt{\Delta}}{2}\ , \quad \text{for all }r = 0,...,k.\] \label{b}
\item \label{c} Let $g = \gcd  \{ (\theta_0 - \theta_r)/\sqrt{\Delta}  \}_{r=0}^k$. Then
\begin{enumerate}
\item $E_r \ee_a = E_r \ee_b$ if and only if $(\theta_0-\theta_r)/(g \sqrt{\Delta})$ is even, and
\item $E_r \ee_a = - E_r \ee_b$ if and only if $(\theta_0-\theta_r)/(g \sqrt{\Delta})$ is odd.
\end{enumerate}
\end{enumerate}
Moreover, if these conditions hold, the minimum time perfect state transfer occurs is $\pi / (g \sqrt{\Delta})$. \qed
\end{theorem}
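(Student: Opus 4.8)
The plan is to reduce perfect state transfer to an eigenspace-by-eigenspace statement about the unitary $U(\tau)=\exp(\ii\tau M)=\sum_r \exp(\ii\tau\theta_r)E_r$ and then read off each of the three conditions. First I would record that $U(\tau)$ is unitary, so its columns are unit vectors and $|U(\tau)_{a,b}|=|\langle\ee_a,U(\tau)\ee_b\rangle|\le 1$ by Cauchy--Schwarz, with equality precisely when $U(\tau)\ee_b=\gamma\ee_a$ for some scalar $\gamma$ of modulus $1$. Thus perfect state transfer at time $\tau$ is equivalent to the single vector identity $U(\tau)\ee_b=\gamma\ee_a$, and writing $\gamma=U(\tau)_{a,b}$ this is the central object.

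\textbf{Forward direction.} Applying the projection $E_r$ to $U(\tau)\ee_b=\gamma\ee_a$ gives $\exp(\ii\tau\theta_r)E_r\ee_b=\gamma E_r\ee_a$ for every $r$. Taking norms shows $\|E_r\ee_a\|=\|E_r\ee_b\|$, so the two columns have the same eigenvalue support; moreover, since $E_r\ee_a$ and $E_r\ee_b$ are real vectors related by the complex scalar $\gamma\exp(-\ii\tau\theta_r)$, that scalar must be real of modulus $1$, i.e.\ $\pm 1$. Writing $E_r\ee_b=\sigma_r E_r\ee_a$ with $\sigma_r\in\{+1,-1\}$ gives strong cospectrality, which is~(a), and yields $\exp(\ii\tau\theta_r)=\sigma_r\gamma$ for all $r$ in the support. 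Squaring, $\exp(2\ii\tau\theta_r)=\gamma^2$ for all such $r$, so $U(2\tau)\ee_a=\gamma^2\ee_a$; hence $a$ is periodic, and invoking Godsil's characterization (cited in~(b)) produces the square-free $\Delta$ and the form $\theta_r=(p+q_r\sqrt\Delta)/2$, which is~(b). For~(c), dividing $\exp(\ii\tau\theta_r)=\sigma_r\gamma$ by the $r=0$ relation gives $\exp(\ii\tau(\theta_0-\theta_r))=\sigma_0\sigma_r$; since $\theta_0-\theta_r=(q_0-q_r)\sqrt\Delta/2$ and one checks from the algebraic-integer constraints that each $(\theta_0-\theta_r)/\sqrt\Delta$ is an integer, writing $n_r=(\theta_0-\theta_r)/(g\sqrt\Delta)$ (an integer, with $\gcd_r n_r=1$ by definition of $g$) turns the relation at $\tau=\pi/(g\sqrt\Delta)$ into $(-1)^{n_r}=\sigma_0\sigma_r$. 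Normalizing $\sigma_0=+1$ (automatic for the adjacency matrix of a connected graph by Perron--Frobenius) recovers the parity dictionary of~(c).

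\textbf{Reverse direction and minimal time.} Conversely, assuming (a)--(c) I would simply exhibit the time $\tau=\pi/(g\sqrt\Delta)$: using $\theta_r=\theta_0-g\sqrt\Delta\,n_r$ one computes $\exp(\ii\tau\theta_r)=(-1)^{n_r}\exp(\ii\tau\theta_0)=\sigma_r\exp(\ii\tau\theta_0)$ by~(c), whence $U(\tau)\ee_b=\sum_r\exp(\ii\tau\theta_r)\sigma_r E_r\ee_a=\exp(\ii\tau\theta_0)\ee_a$ has modulus-$1$ overlap with $\ee_a$, giving perfect state transfer. For minimality, any transfer time $\tau'$ must satisfy $\exp(\ii\tau'(\theta_0-\theta_r))=\pm1$ with the parities dictated by the $\sigma_r$; this forces $\tau'g\sqrt\Delta=\pi s$ for a common rational $s$ with $sn_r\in\mathbb Z$ for all $r$, and since $\gcd_r n_r=1$ the scalar $s$ is a positive integer, with the parity constraints ruling out everything below $s=1$. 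Hence the least transfer time is $\pi/(g\sqrt\Delta)$.

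\textbf{The main obstacle.} The genuinely substantive step is~(b): passing from the rational-ratio condition on the eigenvalue differences (i.e.\ periodicity) to the statement that the support consists either entirely of integers or entirely of quadratic integers sharing a single $\sqrt\Delta$. The difficulty is number-theoretic---the Galois conjugates of a support eigenvalue need not themselves lie in the support---so the clean dichotomy is exactly the content of Godsil's periodicity theorem, which I would cite rather than reprove; everything else reduces to the elementary eigenspace bookkeeping and the gcd argument above.
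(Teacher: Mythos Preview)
The paper does not itself prove this theorem; it is quoted from Coutinho's thesis and closed with a \qed, so there is no in-paper argument to compare against. Your outline is the standard one and is essentially what appears in that reference: reduce perfect state transfer to $U(\tau)\ee_b=\gamma\ee_a$, project onto each $E_r$ to obtain strong cospectrality and the signs $\sigma_r$, square to obtain periodicity and invoke Godsil's theorem for~(b), then match parities for~(c).

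Two points deserve tightening. First, in the forward argument for~(c) you write that the relation ``at $\tau=\pi/(g\sqrt\Delta)$'' gives $(-1)^{n_r}=\sigma_0\sigma_r$, but at that stage $\tau$ is the unknown transfer time, not yet shown to equal $\pi/(g\sqrt\Delta)$. The missing step---which you do supply later, under minimality---is that $\exp\bigl(\ii\tau g\sqrt\Delta\,n_r\bigr)\in\{\pm1\}$ together with $\gcd_r n_r=1$ forces $\tau g\sqrt\Delta=\pi m$ for some integer $m$, and then $m$ must be odd (else all $\sigma_r$ agree and $\ee_a=\pm\ee_b$), yielding $\sigma_0\sigma_r=(-1)^{n_r}$; this argument belongs in the forward direction, not only at the end. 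Second, calling $\sigma_0=+1$ a ``normalization'' is not correct: condition~(c) at $r=0$ \emph{asserts} $E_0\ee_a=E_0\ee_b$, so it has to be proved. Perron--Frobenius handles adjacency matrices of connected graphs, but the theorem is stated for arbitrary symmetric integer $M$; for instance $M=\bigl(\begin{smallmatrix}0&-1\\-1&0\end{smallmatrix}\bigr)$ admits perfect state transfer between its two columns with $\sigma_0=-1$, so either an extra hypothesis is implicit or~(c) should be read only up to a global sign. You should make explicit which reading you are proving.
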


In order to decide whether perfect state transfer occurs, we must check all three conditions above. On the course of the following results, we will show how each of the conditions can be independently checked in polynomial time.

Let $\phi(x)$ be the characteristic polynomial of $M$, and if $T \subset V$, let $\phi_T(x)$ be the characteristic polynomial of the matrix obtained from $M$ by removing the rows and columns indexed by $T$. Recall that the complexity of computing the characteristic polynomial is the same as computing the determinant.

\begin{lemma}\label{lem:multpoles}
For $j \in \{0,...,d\}$, the multiplicity of $\theta_j$ as a pole of $\phi_T(x) / \phi(x)$ is at most equal to the rank of $(E_j)_{T,T}$. If the rank is equal to $|T|$, then the multiplicity of the pole is also equal to $|T|$.
\end{lemma}
\begin{proof}
The following identity is due to Jacobi (see for instance Godsil \cite[Section 4.1]{GodsilAlgebraicCombinatorics}):
\[\det [(xI - M)^{-1}]_{T,T} = \frac{\phi_T(x)}{\phi(x)}.\]
On the other hand, using the spectral decomposition of $M$, it follows that
\[[(xI - M)^{-1}]_{T,T} = \sum_{r = 0}^d \frac{1}{x - \theta_r} (E_r)_{T,T}.\]
Let us just denote $F_r = \frac{1}{x - \theta_r} (E_r)_{T,T}$ and $H = \sum_{r \neq j} F_r$. Now let $P$ be an orthogonal matrix that diagonalizes $F_j$ to a diagonal matrix $D$. It follows that
\[\frac{\phi_T(x)}{\phi(x)} = \det \left(  D + P^T H P \right).\]
This determinant is the sum of the determinants of the matrices we obtain from $P^T H P$ by replacing each subset of columns by the corresponding set from $D$. Note that $D$ contains poles at $\theta_j$ but $P^THP$ does not. Thus the multiplicity of $\theta_j$ as a pole of $\phi_T(x) / \phi(x)$ cannot exceed the rank of $(E_j)_{T,T}$. While equality does not hold in general, it holds for the full rank case, as a consequence of \cite[Theorem 7.2.9]{cvetkovic1997eigenspaces}.
\end{proof}

\begin{lemma} \label{lem:condition(a)}
Given $M$ an $n \times n$ symmetric matrix whose columns are indexed by $V$, two columns $a$ and $b$ of $M$ are strongly cospectral if and only if 
\begin{enumerate}[(i)]
\item $\phi_a(x) = \phi_b(x)$ (when this occurs we call $u$ and $v$ cospectral), and
\item The poles of $\phi_{ab}(x) / \phi(x)$ are simple.
\end{enumerate}
As a consequence, condition (\ref{a}) of Theorem \ref{1.1} can be checked in polynomial time on $n$.
\end{lemma}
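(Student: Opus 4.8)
The plan is to reduce both conditions to statements about the residues of the diagonal entries of the resolvent $(xI-M)^{-1}$, and then to use Lemma~\ref{lem:multpoles} to turn ``simple poles'' into a rank condition. First I would combine Jacobi's identity with the spectral decomposition, exactly as in the proof of Lemma~\ref{lem:multpoles}, applied to $T=\{a\}$, $T=\{b\}$ and $T=\{a,b\}$. Using $E_r=E_r^2=E_r^{\top}$ this records $(E_r)_{a,a}=\|E_r\ee_a\|^2$, $(E_r)_{b,b}=\|E_r\ee_b\|^2$ and $(E_r)_{a,b}=(E_r\ee_a)^{\top}(E_r\ee_b)$, together with the partial-fraction expansion $\phi_a(x)/\phi(x)=\sum_r \|E_r\ee_a\|^2/(x-\theta_r)$ and its analogue for $b$. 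Comparing residues, condition~(i) is equivalent to $\|E_r\ee_a\|^2=\|E_r\ee_b\|^2$ for every $r$; in particular it forces $\Theta_a=\Theta_b$. By Lemma~\ref{lem:multpoles} with $T=\{a,b\}$, condition~(ii) is equivalent to $\operatorname{rank}(E_r)_{\{a,b\},\{a,b\}}\le 1$ for every $r$, and the relevant matrix
\[
(E_r)_{\{a,b\},\{a,b\}}=\begin{pmatrix}\|E_r\ee_a\|^2 & (E_r)_{a,b}\\ (E_r)_{a,b} & \|E_r\ee_b\|^2\end{pmatrix}
\]
is positive semidefinite, being a principal submatrix of the orthogonal projection $E_r$.

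With these identities in hand the equivalence is short. For the forward direction, assume $a,b$ are strongly cospectral, so $E_r\ee_a=\pm E_r\ee_b$ for all $r$: then $\|E_r\ee_a\|^2=\|E_r\ee_b\|^2$, giving (i) after comparing residues, and the $2\times 2$ matrix above equals $\|E_r\ee_a\|^2\left(\begin{smallmatrix}1 & \pm 1\\ \pm 1 & 1\end{smallmatrix}\right)$, which has rank at most $1$, giving (ii). Conversely, assume (i) and (ii) and fix $r$. If $\theta_r\notin\Theta_a=\Theta_b$ there is nothing to check. Otherwise $E_r\ee_a,E_r\ee_b\neq 0$ and, writing $\alpha:=\|E_r\ee_a\|=\|E_r\ee_b\|>0$, the rank-at-most-$1$ condition on a positive semidefinite matrix forces its determinant $\alpha^4-(E_r)_{a,b}^2$ to vanish, i.e.\ $|(E_r)_{a,b}|=\alpha^2=\|E_r\ee_a\|\,\|E_r\ee_b\|$. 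This is equality in the Cauchy--Schwarz inequality for $(E_r)_{a,b}=(E_r\ee_a)^{\top}(E_r\ee_b)$, so $E_r\ee_a$ and $E_r\ee_b$ are parallel, and comparing norms gives $E_r\ee_a=\pm E_r\ee_b$. Hence $a,b$ are strongly cospectral.

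For the complexity statement, note that $\phi$, $\phi_a$, $\phi_b$ and $\phi_{ab}$ are characteristic polynomials of integer matrices with entries in $[-n,n]$, so each is computable in polynomial time (the cost of a determinant) and, by a Hadamard-type bound, has integer coefficients of polynomially many bits. Condition~(i) is then checked by comparing coefficient vectors. For condition~(ii), the poles of $\phi_{ab}(x)/\phi(x)$ are exactly the roots of the reduced denominator $\delta(x):=\phi(x)/\gcd(\phi(x),\phi_{ab}(x))$, and they are all simple if and only if $\delta$ is square-free, i.e.\ $\gcd(\delta,\delta')$ is a nonzero constant; the gcd, the derivative, and the square-freeness test are all polynomial-time operations on integer polynomials (for instance via subresultants, which keep the intermediate coefficients polynomially bounded). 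Combining these, condition~(\ref{a}) of Theorem~\ref{1.1} is decided in polynomial time.

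The conceptual crux is the converse direction: one has to see that cospectrality is precisely the extra ingredient that upgrades ``$E_r\ee_a$ and $E_r\ee_b$ are parallel'' — all that $\operatorname{rank}\le 1$ gives by itself — to ``$E_r\ee_a=\pm E_r\ee_b$'', and that positive semidefiniteness of the $2\times 2$ principal submatrix is what makes the rank condition equivalent to the clean determinant identity. On the algorithmic side, the only point requiring care is controlling the bit-size of the polynomials arising in the gcd computation, which is standard.
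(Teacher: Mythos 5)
Your proposal is correct and follows essentially the same route as the paper: condition (i) is reduced to $(E_r)_{a,a}=(E_r)_{b,b}$ for all $r$, condition (ii) to the rank-one (parallelism) condition via Lemma~\ref{lem:multpoles}, and the two are combined to force $E_r\ee_a=\pm E_r\ee_b$, with the same gcd-based square-freeness test for the algorithmic part. The only difference is cosmetic: you derive the cospectrality criterion directly from Jacobi's identity and residues (where the paper cites Godsil--McKay) and use Cauchy--Schwarz where the paper writes $E_r\ee_a=\lambda E_r\ee_b$ and computes $(E_r)_{a,a}=\lambda^2(E_r)_{b,b}$.
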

\begin{proof} \
\begin{itemize}
\item[] Claim 1: Condition (ii) is equivalent to $a$ and $b$ being parallel, that is, for all $r$, either $E_r \ee_a$ or $E_r \ee_b$ is a scalar multiple of the other.

This is an immediate consequence of Lemma \ref{lem:multpoles}.

\item[] Claim 2: Condition (i) is equivalent to, for all $r$, $(E_r)_{a,a} = (E_r)_{b,b}$.

This is due to Godsil and McKay \cite{GodsilMcKay}.
\end{itemize}

Now, clearly if $a$ and $b$ are strongly cospectral, then they are parallel. So we proceed to assume that $a$ and $b$ are parallel vertices and show that, in this case, strong cospectrality is equivalent to cospectrality. In fact, $E_r$ is idempotent, thus ${E_r \ee_a = 0}$ if and only if $(E_r)_{a,a} = 0$. So in either case it must be that $E_r \ee_a = 0$ if and only if $E_r \ee_b = 0$ for all $r$. Now suppose that $E_r \ee_a = \lambda E_r \ee_b \neq 0$. Then 
\[(E_r)_{a,a} = \lambda (E_r)_{a,b} = \lambda (E_r)_{b,a} = \lambda^2 (E_r)_{b,b},\]
thus they are strongly cospectral if and only if they are cospectral.

To finish the proof, we show that both conditions can be checked in polynomial time. Nothing needs to be said about condition (i).  For condition (ii), let $g(x)$ be the greatest common divisor of $\phi(x)$ and $\phi_{ab}(x)$. Condition (ii) is equivalent to $f(x) = \phi(x) / g(x)$ being a polynomial without repeated roots, which in turn can be verified by checking whether or not the greatest common divisor of $f(x)$ and $f'(x)$ is a constant. 
\end{proof}

\begin{lemma} \label{lem:condition(b)}
Given an $n \times n$ symmetric integer matrix $M$ whose entries belong to $[-n,n]$, condition (b) of Theorem \ref{thm:charpst} can be tested in polynomial time. Moreover, if the conditions passes, the eigenvalues in $\Theta_a$ and their corresponding eigenvectors can be computed in polynomial time.
\end{lemma}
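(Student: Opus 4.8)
The plan is to reduce the test for condition~(b) to two classical polynomial-time primitives: factoring a univariate polynomial over $\mathbb{Q}$, and deciding whether a given integer is a perfect square. First I would isolate the eigenvalue support. Applying Lemma~\ref{lem:multpoles} with $T=\{a\}$, the multiplicity of $\theta_r$ as a pole of $\phi_a(x)/\phi(x)$ equals the rank of the $1\times1$ matrix $(E_r)_{a,a}$, so $\theta_r$ is a simple pole exactly when $E_r\ee_a\neq0$, that is, when $\theta_r\in\Theta_a$. Consequently, writing $h(x)$ for the monic greatest common divisor of $\phi(x)$ and $\phi_a(x)$ in $\mathbb{Q}[x]$, the polynomial $\psi_a(x):=\phi(x)/h(x)$ is square-free and its roots are exactly the elements of $\Theta_a$; by Gauss's lemma $\psi_a\in\mathbb{Z}[x]$. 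Since $\phi$ and $\phi_a$ have coefficients of polynomial bit-length (Hadamard's inequality, as the entries of $M$ lie in $[-n,n]$) and the gcd of two integer polynomials is computable in polynomial time, $\psi_a$ can be computed in polynomial time.

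Next I would factor $\psi_a$ into monic irreducible integer polynomials,
\[
\psi_a(x)=(x-c_1)\cdots(x-c_m)\,g_1(x)\cdots g_t(x),\qquad g_j(x)=x^2-s_jx+p_j,
\]
using the polynomial-time factorization algorithm of Lenstra, Lenstra and Lov\'{a}sz; here the $c_i\in\mathbb{Z}$ are the integer eigenvalues in $\Theta_a$, and each $g_j$ is an irreducible quadratic, so $d_j:=s_j^2-4p_j$ is a positive non-square. The crux is the following reformulation: condition~(b) holds if and only if (i)~every irreducible factor of $\psi_a$ has degree at most two, and (ii)~either $t=0$, or $t\ge1$, the $s_j$ all equal a common value $p$, every product $d_id_j$ is a perfect square, and every integer root $c$ of $\psi_a$ satisfies $2c=p$. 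For the forward direction, if $\theta_r=(p+q_r\sqrt{\Delta})/2$ with $\Delta$ square-free and $q_r\in\mathbb{Z}$, then $\theta_r$ has degree at most two over $\mathbb{Q}$; if some $q_r\ne0$ then $\Delta$ is a non-square, each $g_j$ has $s_j=p$ and discriminant $q^2\Delta$ for an integer $q$ (hence the $d_id_j$ are squares), and every rational --- thus integer --- eigenvalue has $q_r=0$, that is, equals $p/2$. Conversely, two positive non-square integers have the same square-free part precisely when their product is a perfect square; so if (i) and (ii) hold with $t\ge1$, all the $d_j$ share a square-free part $\Delta$, say $d_j=e_j^2\Delta$ with $e_j\in\mathbb{Z}$, the roots of $g_j$ are $(p\pm e_j\sqrt{\Delta})/2$, and the integer roots equal $p/2=(p+0\cdot\sqrt{\Delta})/2$, exhibiting the common form; when $t=0$ we have $\Theta_a\subseteq\mathbb{Z}$ and may take $\Delta=1$, $p=0$. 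Both (i) and (ii) are checked in polynomial time, since the degrees and the integers $c_i,s_j,p_j$ are read off the factorization and integer perfect-square testing is polynomial time.

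Finally, if condition~(b) passes I would output the eigenvalues of $\Theta_a$ together with bases of their eigenspaces. The key observation is that one need not --- and in general cannot in polynomial time --- compute the square-free $\Delta$ itself, only the quadratic field it generates, which is concretely $\mathbb{Q}(\sqrt{d_1})$ (if $t=0$ the eigenvalues are simply the integers $c_i$). Since each $d_jd_1$ is a perfect square, $\rho_j:=\sqrt{d_jd_1}/d_1\in\mathbb{Q}$ satisfies $\sqrt{d_j}=\rho_j\sqrt{d_1}$, so the roots of $g_j$ are $(s_j\pm\rho_j\sqrt{d_1})/2\in\mathbb{Q}(\sqrt{d_1})$; thus every element of $\Theta_a$ is produced explicitly as $u+v\sqrt{d_1}$ with $u,v\in\mathbb{Q}$ of polynomial bit-length. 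For each such $\theta_r$ I would solve $(M-\theta_rI)x=0$ by Gaussian elimination over $\mathbb{Q}$ or over $\mathbb{Q}(\sqrt{d_1})$ --- where a field element is a pair of rationals and each arithmetic operation costs polynomial time --- to obtain a basis of $\ker(M-\theta_rI)$, whence the projections $E_r\ee_a$ and $E_r\ee_b$ follow at once.

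The main obstacle is precisely this inability to factor integers (equivalently, to extract square-free parts) in polynomial time, which at first sight seems needed to realize the $\Delta$ of condition~(b); it is bypassed above by testing ``common square-free part'' via perfect squares of products of discriminants, and by performing all subsequent linear algebra inside the explicitly given field $\mathbb{Q}(\sqrt{d_1})$. The only remaining routine matter is bit-length control throughout --- for the coefficients of $\phi$, $\phi_a$, $\psi_a$ and its factors, for the discriminants $d_j$, and for the intermediate quantities of the gcd and the Gaussian elimination --- which follows from the Hadamard and Mignotte bounds together with fraction-free elimination.
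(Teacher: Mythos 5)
Your proof is correct, but it takes a genuinely different route from the paper's. The paper leans on the magnitude hypothesis $M_{ij}\in[-n,n]$: it bounds $\sum_r\theta_r^2\le\operatorname{tr}M^2\le n^4$, so every eigenvalue lies in $[-n^4,n^4]$, and then observes that there are only polynomially many candidate roots of the prescribed form $(p+q_r\sqrt{\Delta})/2$ in that range (the sum of the roots of $f$ pins down $p$), so one can simply enumerate and test them; since the relevant integers are polynomially bounded in value, even the square-free part $\Delta$ can be extracted by trial division. You instead factor $f=\psi_a$ into irreducibles via LLL, characterize condition (b) by the shape of the factors (degree at most two, common linear coefficient $p$, discriminants with a common square-free part, integer roots equal to $p/2$), and sidestep integer factorization entirely with the observation that two non-square positive integers share a square-free part iff their product is a perfect square, then do the linear algebra inside the explicitly presented field $\mathbb{Q}(\sqrt{d_1})$. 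The paper's argument is more elementary (no lattice-based factoring) but is terse precisely where yours is careful, and it genuinely needs the eigenvalues to be polynomially bounded in \emph{magnitude}; your argument only needs the coefficients of $\phi$ to have polynomially bounded \emph{bit-length}, so it would strengthen the footnote's hypothesis from entries of polynomial size to entries of polynomial bit-length. One small point worth recording if you adopt your version: for condition (c) of Theorem \ref{thm:charpst} one never needs $\Delta$ itself either, since the ratios $(\theta_0-\theta_r)/(g\sqrt{\Delta})$ are unchanged when $\sqrt{\Delta}$ is replaced by $\sqrt{d_1}$, so your representation suffices for the proof of Theorem \ref{1.1} as well.
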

\begin{proof}
For a column $a$ of $M$, let
\[f(x) = \frac{\phi(x)}{\gcd(\phi(x) , \phi_a(x))}.\]
From Lemma \ref{lem:multpoles}, we see that $f(x)$ has simple roots, which are precisely the eigenvalues of $M$ that lie in $\Theta_a$. If the degree of $f(x)$ is $k$, the coefficient of $(-x)^{k-1}$ is the sum of the roots of $f(x)$. 

From the assumption that the entries of $M$ are bounded, we have that
\[\sum_{r = 0}^d \theta_r^2 \leq \tr M^2 = \sum_{ 1 \leq i,j \leq n} M_{ij}^2 \leq n^4,\]
thus the eigenvalues of $M$ lie in the interval $[-n^4,n^4]$. Condition (b) holds if and only if all roots of $f(x)$ are either integers or quadratic integers with a constant rational term. Because the sum of the eigenvalues is known, the number of possible candidates for the roots of $f(x)$ in the interval $[-n^4,n^4]$ that satisfies condition (b) is a polynomial function on $n$, therefore we can efficiently check whether $f(x)$ satisfies condition (b), and compute its factorization.
\end{proof}

\begin{proof}[Proof of Theorem \ref{1.1}]
From Lemmas \ref{lem:condition(a)} and \ref{lem:condition(b)}, conditions (a) and (b) of Theorem \ref{thm:charpst} can be checked efficiently, and if condition (b) holds then $\Theta_a$ can be computed efficiently. Upon knowing the elements of $\Theta_a$, the computation of $g$ and of the ratios $(\theta_0 -\theta_r) / (g \sqrt{\Delta})$ can be carried out, remaining only to check whether the signs appearing in condition (c) are compatible. To that effect, it suffices to note that for any eigenvector $\xx$ associated to an eigenvalue $\theta_r$ such that $\xx_a \neq 0$, we have that if $E_r \ee_a = E_r \ee_b$, then $\xx^T E_r \ee_a = \xx^T E_r \ee_b$ and thus $\xx_a = \xx_b$; and likewise, if $E_r \ee_a = - E_r \ee_b$, then $\xx_a = -\xx_b$.
\end{proof}

\section{Comments and related open questions}

\subsubsection*{Related work}

Chen et al.~\cite{ChenYuTanContinuousOrbitPolyTime} recently worked on the problem of determining when there is a positive real number $t$ such that, for a given rational matrix $M$ and rational vectors $\uu$ and $\vv$, $\e^{t M} \uu = \vv$. There is a clear resemblance to our problem, however our result does not follow from theirs for the following reason. They assume one can compute the Jordan Normal Form and the eigenvectors of a given matrix efficiently, citing the work of Cai \cite{CaiJNFPolyTime}, who in turn deals with roots of polynomials of degree larger than $4$ by computing sufficiently good rational approximations. We note that this approach would not determine whether or not perfect state transfer occurs, but only some arbitrarily good phase-shifted version of it. The fact that we are able to examine the eigenvalues in the support of a column locally (Lemma \ref{lem:condition(b)}), and restrict to the case where such eigenvalues are integers or quadratic integers (Godsil \cite[Theorem 6.1]{GodsilPerfectStateTransfer12}) is key to efficiently determine if perfect state transfer indeed occurs. Also, the fact that they are not considering complex numbers leads them to easily rule out the case $p \neq 0$ of condition (b) in Theorem \ref{thm:charpst}, whereas we know that there are infinitely many examples of symmetric matrices admitting perfect state transfer in this case.

\subsubsection*{Uniform mixing}
Given a $n \times n$ integer symmetric matrix $M$, consider the mixing matrix\footnote{This is usually denoted as $M(t)$, but we are already using $M$ with a different meaning.}
\[N(t) = \exp(\ii t M) \circ \exp(-\ii t M),\]
where the product depicted is the entry-wise matrix product\footnote{Also known as Schur or Hadamard product, or perhaps the bad student product.}. We say that $M$ admits uniform mixing at a time $\tau$ if
\[N(\tau) = \frac{1}{n} J,\]
where $J$ stands for the matrix whose all entries are equal to $1$.

Determining whether a matrix admits uniform mixing is a considerably harder problem than that of perfect state transfer, even in the special case where $M$ is the adjacency matrix of a simple graph and its eigenvalues are known (see for instance Godsil et al.~\cite{GodsilMullinRoy}). For this reason, we believe a reasonable question here is to determine classes of graphs for which the existence of uniform mixing can be tested in polynomial time. For example:
\begin{itemize}
\item[1.] Can we test whether a cubelike graph admits uniform mixing in polynomial time?
\end{itemize}

\subsubsection*{Average mixing}

Note that $N(t)$ denotes a probability distribution. The average of this distribution can be computed as follows
\[\hat{N}(t) = \lim_{T \to \infty} \frac{1}{T} \int_{0}^T N(t) \dd t.\]
As shown in Godsil \cite{GodsilAverageMixing}, it follows that, if $M$ admits spectral decomposition $M = \sum_{r = 0}^d \theta_r E_r$, then
\begin{align}\hat{N}(t) = \sum_{r = 0}^d E_r \circ E_r. \label{eq2}\end{align}
The average mixing matrix enjoys interesting properties, notably, it is rational, doubly-stochastic and positive semidefinite. Moreover, two columns of $\hat{N}(t)$ are equal if and only if they are strongly cospectral, a property that, as seen above, can be tested in polynomial time. We therefore ask:
\begin{itemize}
\item[2.] Given $M$ a symmetric integer matrix, can $\hat{N}(t)$ be computed in polynomial time? 
\end{itemize}
Naturally, as seen from (\ref{eq2}), $\hat{N}(t)$ can be computed numerically in polynomial time up to desired precision. However the rationality of $\hat{N}$ suggests there could be a way of doing this computation symbolically.

\subsubsection*{Pretty good state transfer}

Recall the definition of perfect state transfer in $(\ref{pst})$. This definition can be relaxed to an $\epsilon$ version as follows. If $a,b \in V$, we say that $M$ admits pretty good (or almost) state transfer between $a$ and $b$ if for any $\epsilon > 0$, there is a time $\tau \in \R^+$ such that 
\[|\exp( \ii \tau M)_{a,b}| \geq 1 - \epsilon. \]
Naturally, perfect state transfer implies pretty good, but the converse does not hold. For instance, it is been shown that infinitely many paths admit pretty good state transfer between the end vertices in both the adjacency matrix and the Laplacian matrix model of quantum walks (see \cite{GodsilKirklandSeveriniSmithPGST} \cite{BanchiCoutinhoGodsilSeverini}), as opposed to what happens with perfect state transfer. It is thus natural to ask whether pretty good state transfer can also be determined in polynomial time. As it turns out, pretty good state transfer has been characterized via Kronecker's theorem on Diophantine approximations (see \cite[Theorem 4]{BanchiCoutinhoGodsilSeverini}). In order to test the condition on Kronecker's theorem, it seems to be necessary to obtain some information about the minimal polynomial over the rationals of each of the eigenvalues of $M$, and their corresponding splitting fields. This suggests that the problem of deciding whether or not pretty good state transfer occurs might be harder than deciding perfect state transfer. We thus ask:
\begin{itemize}
\item[3.] Given $M$ a symmetric integer matrix, is there an algorithm that terminates in finite time that determines whether or not there is pretty good state transfer between two columns of $M$?
\end{itemize}

\printbibliography

\end{document}